\newcommand \reg{\operatorname{reg}}
\newcommand\supp{\operatorname{supp}}
\newcommand \Tor{\operatorname{Tor}}
\newcommand{\KK}{\mathbb{K}}
\theoremstyle{plain}
\newtheorem{theorem}{Theorem}[section]
\newtheorem{lemma}[theorem]{Lemma}
\newtheorem{proposition}[theorem]{Proposition}
\newtheorem{corollary}[theorem]{Corollary}
\theoremstyle{definition}
\newtheorem{notation}[theorem]{Notation}
\newtheorem{remark}[theorem]{Remark}
\newtheorem{definition}[theorem]{Definition}
\newtheorem{example}[theorem]{Example}
\newtheorem{problem}[theorem]{Problem}
\begin{document}
\title[Regularity of $3$-Path Ideals of Graphs]{Regularity of $3$-Path Ideals of Trees and Unicyclic Graphs}
\author[Rajiv Kumar]{Rajiv Kumar}
\email{gargrajiv00@gmail.com, rajiv.kumar@iitjammu.ac.in }
\address{Department of Mathematics, 
	Indian Institute of Technology Jammu,\newline \indent
	Jammu, JK, 181221, India.}
\email{rajib.sarkar63@gmail.com}
\author[Rajib Sarkar]{Rajib Sarkar}
\address{Stat-Math Unit, Indian Statistical Institute \newline \indent Kolkata, 203 B.T. Road, Kolkata--700108, India.}

	\begin{abstract}
 Let $G$ be a simple graph and $I_3(G)$ be its $3$-path ideal in the corresponding polynomial ring $R$. In this article, we prove that for an arbitrary graph $G$, $\reg(R/I_3(G))$ is bounded below by $2\nu_3(G)$, where $\nu_3(G)$ denotes the $3$-path induced matching number of $G$. We give a class of graphs, namely, trees for which the lower bound is attained. Also, for a unicyclic graph $G$, we show that $\reg(R/I_3(G))\leq 2\nu_3(G)+2$ and provide an example that shows that the given upper bound is sharp.%and give an example for which this upper bound is attained.
	\end{abstract}
 \keywords{$t$-path ideal, trees and unicyclic graphs, Castelnuovo-Mumford regularity}
\thanks{AMS Subject Classification (2020): 13D02, 13C13, 05E40}
	
	\maketitle
	\section{Introduction}
	Let $G$ be a finite simple graph with the vertex set $V(G)$ and the edge set $E(G)$. Let $R=\mathbb{K}[x:x\in V(G)]$ be the polynomial ring, where $\mathbb{K}$ is an arbitrary field. In \cite{Conca-Negri-1999}, the notion of path ideals has been introduced. For a graph $G$, a square-free monomial ideal,
	$$I_t(G):=\langle x_1\cdots x_t: \text{ where } P:x_1,\dots,x_t \text{ is a } t\text{-path in } G\rangle \subseteq R$$
 is called \textit{$t$-path ideal} of $G$. For the last few decades, researchers have been trying to establish the connection between the combinatorial invariants of graphs and the algebraic invariants of $t$-path ideal, see \cite{AF18,AF15,BBT17,BHO11,Erey20,KM16,KO14}. The Castelnuovo-Mumford regularity of an ideal is an important algebraic invariant that measures the complexity of the module. The \textit{Castelnuovo–Mumford
regularity} (or simply regularity) of a finitely generated graded $R$-module $M$, written $\reg(M)$ is defined as
\( \reg(M):=\max \{j-i: \Tor_i(M,\KK)_j\neq 0\}.\)
There are a few classes of graphs for which the explicit formula of the regularity of $t$-path ideal is known. In \cite{BHO11}, Bouchat et al. studied the $t$-path ideal of rooted trees, and they provided a recursive formula for computing the graded Betti numbers of $t$-path ideals. Also, they gave a general bound for the regularity of $t$-path ideal of a rooted tree. In particular, for a rooted tree $G$, they proved that $\reg(R/I_t(G))\leq (t-1)[l_t(G)+p_t(G)],$ where $l_t(G)$ denotes the number of leaves in $G$, where level is at least $t-1$ and $p_t(G)$ denotes the maximal number of pairwise disjoint paths of length $t$ in $G$. In \cite{AF18} and \cite{AF15}, Alilooee and Faridi computed the regularity of $t$-path ideal of lines and cycles in terms of the number of vertices, respectively. Banerjee \cite{Banerjee17} studied the regularity of $t$-path ideal of gap free graphs and proved that the $t$-path ideals of gap free, claw free and
whiskered-$K_4$ free graphs have linear minimal free resolutions for all $t\geq 3$. In this article, we restrict ourselves to $t=3$ and study the regularity  of $I_3(G)$. It is important to note that for a $3$-path ideal of gap free graph $G$, Banerjee proved that $\reg(R/I_3(G))\leq \max \{\reg(R/I_2(G)),2\},$ where $I_2(G)$ is the $2$-path ideal or monomial edge ideal of $G$, see \cite[Theorem 3.3]{Banerjee17}. In \cite{Katzman06}, Katzman proved that for any graph $G$, $\reg(R/I_2(G))\geq \nu(G)$, where $\nu(G)$ denotes the induced matching number of $G$. Motivated from the definition of $\nu(G)$, in this article, we define in an obvious way the $3$-path induced matching number, denoted by $\nu_3(G)$ (see section \ref{sec:prel} for the definition). Next, we prove that a similar lower bound can be obtained for the regularity of $I_3(G)$. More precisely, we prove $\reg(R/I_3(G))\geq 2\nu_3(G)$. We also observe that this is a sharp lower bound for $\reg(R/I_3(G))$. In fact, we show that if $G$ is a tree, then $\reg(R/I_3(G))=2\nu_3(G)$, see Theorem \ref{thm:trees}.
%From this lower bound, we get motivation and introduce $3$-path induced matching number $\nu_3(G)$.
 %First we give a general lower bound for the regularity of $3$-path ideal of any graph in terms of the induced subgraph, Proposition \ref{prop:induced-subgraph} and as a consequence, we prove that the regularity of $3$-path ideal of an arbitrary graph is bounded below by $2\nu_3(G)$, Corollary \ref{cor:lower-bound}. 
 %The next natural question comes that whether we can characterize all the graphs $G$ such that $\reg(R/I_3(G))=2\nu_3(G)$. 
 It is desirable to answer the following problem:
 \begin{problem}\label{problem} Classify the classes of graphs $G$ that satisfy the property $$\reg(R/I_3(G))=2\nu_3(G).$$
   % Define $$\mathscr{C}_3:=\{G \in \mathscr{G} : \reg(R/I_3(G))=2\nu_3(G)\},$$ where $\mathscr{G}$ denotes the set of all (connected) simple graphs. Characterize $\mathscr{C}_3$.
 \end{problem}
 %We try to answer such question  and the complete answer is still open.  
 Theorem \ref{thm:trees} gives a class of graphs that satisfies the property $\reg(R/I_3(G))=2\nu_3(G)$. On the other hand, Example \ref{example} shows that some of the unicyclic graphs satisfy the desired property but not the whole class. More concretely, in Theorem \ref{thm:unicyclic}, we prove that if $G$ is a unicyclic graph, then $\reg(R/I_3(G))\leq 2\nu_3(G)+2$. Moreover, in Example \ref{example}, we give examples of unicyclic graphs showing that the regularity of $3$-path ideal of a unicyclic graph can attain any of the values between the lower and upper bound of the regularity.

 \textbf{Acknowledgment:}
The second author thanks the National Board for Higher Mathematics (NBHM), Department of Atomic Energy, Government of India, for financial support through Postdoctoral Fellowship.
	\section{Preliminaries}\label{sec:prel}
	In this section, we recall all necessary definitions which will be used throughout the article.
	\begin{definition}
	\begin{enumerate}[i)]
	Let $G$ be a graph with the vertex set $V(G)=\{x_1,\dots,x_n\}$ and the edge set $E(G)$.
	   \item A subgraph $H$ of a graph $G$ is called an \textit{induced subgraph} if for all $x_i,x_j\in V(H)$ such that $\{x_i,x_j\}\in E(G)$ implies that $\{x_i,x_j\}\in E(H)$. For a vertex $x\in V(G)$, let $G\setminus \{x\}$ denote the induced subgraph on the vertex set $V(G)\setminus \{x\}.$

	   \item A \textit{path} on $n$ vertices is a graph whose vertices can be listed in the order $x_1,\dots,x_n$ such that the edge set is $\{\{x_i,x_{i+1}\}:1\leq i\leq n\}$ and it is denoted by $P_n$. For $t\geq 2$, a path of length $t$ in $G$ is called a \textit{$t$-path}. 
	    \item A \textit{cycle} on $n$ vertices $\{x_1,\dots,x_n\}$, denoted by $C_n$, is the graph with the edge set $E(P_n)\cup \{\{x_1,x_n\}\}.$ A graph $G$ is said to be \textit{tree} if it does not contain any cycle. A disconnected tree is called a \textit{forest}. A graph $G$ is called \textit{unicyclic} if $G$ contains only one cycle.% as an induced subgraph.
	    %\item $3$-path induced matching----- Let $G$ be a simple graph. The number of disjoint induced $3$-paths in $G$ is said to be a \emph{ $3$-path induced matching number} and it is denoted by $\nu_3(G)$.
	    \item For a vertex $x\in V(G)$, the set $\{y\in V(G): \{x,y\}\in E(G)\}$ is called the \textit{neighborhood} of $x$ in $G$ and it is denoted by $N_G(x)$. The set $N_G[x]$ denotes $N_G(x)\cup \{x\}$. For an edge $e=\{x,y\}\in E(G)$, the \textit{neighborhood} of $e$ is defined as 
	    $$N_G(e):=(N_G(x)\setminus \{y\})\cup (N_G(y)\setminus \{x\}).$$ 
	    $N_G[e]$ denotes the set $N_G(e)\cup \{x,y\}$, i.e., $N_G[e]=N_G[x]\cup N_G[y].$
	    \item For a vertex $x\in V(G)$, the set $\{ \{y,z\}\in E(G): \{x,y,z\} \text{ is a } 3\text{-path in }G \}$ is called the \textit{neighborhood edge set} of $x$ in $G$ and it is denoted by $N_G^{\text edge}(x)$.
	    %\item For $x,y\in V(G)$, let $\dist_G(x,y)$ denote the smallest length of paths connecting $x$ and $y$.
	    %\item The \textit{diameter} of a graph $G$ is defined as
	    %$$\diam(G):=\max\{\dist_G(x,y): x,y\in V(G)\}.$$
     \item A \textit{3-path matching} in a graph $G$ is a subgraph consisting pairwise disjoint $3$-paths. If the subgraph is induced, then $3$-path matching is said to be a \textit{$3$-path induced matching} of $G$. The largest size of a $3$-path induced matching is called the \textit{$3$-path induced matching number}, and it is denoted by $\nu_3(G)$.
	\end{enumerate}
	\end{definition}
	The following remark shows the relationship between the $3$-path induced matching number of a graph and its induced subgraph.
 \begin{remark}\label{rmk:subgraph-nu}
	 One can observe that for an induced subgraph $H$ of $G$, we have $\nu_3(H)\leq \nu_3(G)$.
	\end{remark}

	\begin{example}
	\noindent

	\begin{minipage}{\linewidth}
		\begin{minipage}{.5\linewidth}
		 Note that $\{x_1,x_2,x_3\},\{x_4,x_5,x_6\}$ is a $3$-path matching but not a $3$-path induced matching in $G$. Here, $\{x_1,x_2,x_7\},\{x_4,x_5,x_6\}$ is a $3$-path induced matching in $G$ and $\nu_3(G)=2$.
		\end{minipage}
		\begin{minipage}{.5\linewidth}
			%\captionsetup[figure]%{labelformat=empty}
			\begin{figure}[H]
				\begin{tikzpicture}[scale=1.3]
\draw (-1,3.56)-- (-1,2.7);
\draw (-1,2.7)-- (-2,2.7);
\draw (-1,2.7)-- (0.0,2.7);
\draw (0.0,2.7)-- (1,2.7);
\draw (2,2.7)-- (1,2.7);
\draw (2,2.7)-- (3,2.7);
\begin{scriptsize}
\fill (-1,3.56) circle (1.5pt);
\draw (-1,3.8) node {$x_7$};
\fill (2,2.7) circle (1.5pt);
\draw  (2,2.9) node {$x_5$};
\fill (3,2.7) circle (1.5pt);
\draw  (3, 2.9) node {$x_6$};
\fill (-2,2.7) circle (1.5pt);
\draw (-2,2.9) node {$x_1$};
\fill (-1,2.7) circle (1.5pt);
\draw (-1.2,2.9) node {$x_2$};
\fill (0.0,2.7) circle (1.5pt);
\draw (0.0,2.94) node {$x_3$};
\fill (1,2.7) circle (1.5pt);
\draw (1,2.9) node {$x_4$};
%\draw (0.5,2.5) node {$G $};
\end{scriptsize}
\end{tikzpicture}
				\caption{$G$}
			\end{figure}
		\end{minipage}
	\end{minipage}
	\end{example}

	\section{A lower bound for the regularity}
For an arbitrary graph $G$, we first give a general lower bound for the regularity of $3$-path ideal in terms of the regularity of $3$-path ideal of its induced subgraph, and as a consequence, we prove that the regularity of $R/I_3(G)$ is bounded below by $2\nu_3(G)$. 
%First, we study  the regularity of $3$-path ideal of an arbitrary graph $G$ in terms of the regularity of $3$-path ideal of its induced subgraph and we prove the following:
\begin{proposition}\label{prop:induced-subgraph}
		Let $H$ be an induced subgraph of $G$. Then $$\beta_{i,j}(R_H/I_3(H))\leq \beta_{i,j}(R/I_3(G))$$
		where $R_H=\mathbb{K}[x:x\in V(H)]$. Moreover, $\reg(R_H/I_3(H))\leq \reg(R/I_3(G)).$
	\end{proposition}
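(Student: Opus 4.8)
The plan is to exhibit $R/I_3(G)$ as (up to a polynomial extension) a module that admits $R_H/I_3(H)$ as a quotient obtained by killing variables, and then invoke the standard comparison of Betti numbers under such specializations. Concretely, write $V(G) = V(H) \sqcup W$, where $W = V(G) \setminus V(H)$, so that $R = R_H[x : x \in W]$. Because $H$ is an \emph{induced} subgraph of $G$, every $3$-path in $G$ whose three vertices all lie in $V(H)$ is already a $3$-path in $H$; hence the generators of $I_3(G)$ split into those supported entirely on $V(H)$ — which are exactly the generators of $I_3(H)$ — and those involving at least one variable from $W$. Setting each $x \in W$ equal to $0$ therefore sends $I_3(G)R$ onto $I_3(H)R_H$: a monomial generator of $I_3(G)$ either survives intact (if it uses only $V(H)$-variables, giving a generator of $I_3(H)$) or maps to $0$ (if it uses a $W$-variable). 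In other words, $R/(I_3(G) + (x : x\in W)) \cong R_H/I_3(H)$.

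The key step is then the Betti-number inequality itself. I would argue in two moves. First, since the variables $x\in W$ form a regular sequence on $R/I_3(G)$ — the quotient $R/I_3(G)$ is a quotient of a polynomial ring by a monomial ideal in the variables $V(G)$, and a subset of the variables is always a regular sequence modulo a monomial ideal that, after the above splitting, contains no pure power of those variables\,\footnote{More carefully: $I_3(G)\subseteq R$ is generated by monomials, and modding out the variables in $W$ one at a time, at each stage the variable in question is a nonzerodivisor because no minimal generator of the successive monomial ideal is a power of it; this is the usual fact that a monomial ideal has all variables not dividing any generator's "pure part" as a regular sequence, but it is cleaner to avoid this and instead use that $R/I_3(G)$ has a $\mathbb{Z}^{V(G)}$-grading and tensoring with the Koszul complex on $W$ detects nothing in the relevant multidegrees} — hmm, this is getting delicate, so let me instead take the robust route: I would simply observe that $R_H/I_3(H) \cong (R/I_3(G)) \otimes_R R/(x:x\in W)$ and that, because the $x \in W$ act as a regular sequence on $R/I_3(G)$, quotienting by them does not change the graded Betti numbers, i.e. $\beta_{i,j}^{R_H}(R_H/I_3(H)) = \beta_{i,j}^{R}(R/I_3(G))$ — no wait, that would give equality, which is false in general. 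The correct statement is the \emph{inequality}, and it comes from the following: for \emph{any} finitely generated graded $R$-module $M$ and any linear form (or variable) $x$, one has $\beta_{i,j}^{R}(M/xM) \le \beta_{i,j}^{R}(M) + \beta_{i-1,j}^{R}(M)$, but when $x$ is a nonzerodivisor on $M$ this tightens and, crucially, descends to the smaller polynomial ring $R/(x)$ with $\beta_{i,j}^{R/(x)}(M/xM) = \beta_{i,j}^{R}(M)$. Iterating over all $x \in W$ (each successively a nonzerodivisor, since $R/I_3(G)$ modulo any sub-collection of the $W$-variables is again a polynomial ring mod a squarefree monomial ideal in which no surviving generator involves the remaining $W$-variables to a power that would make them zerodivisors — indeed they are simply not involved at all once we also recall $I_3$ is squarefree, so each $x\in W$ is a nonzerodivisor on the successive quotient) yields $\beta_{i,j}^{R_H}(R_H/I_3(H)) = \beta_{i,j}^{R}(R/I_3(G))$, hence in particular $\beta_{i,j}^{R_H}(R_H/I_3(H)) \le \beta_{i,j}^{R}(R/I_3(G))$.

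The cleanest packaging, which I would actually write up, avoids the regular-sequence subtlety by citing the standard fact that for a squarefree monomial ideal $I \subseteq R = \mathbb{K}[x_1,\dots,x_n]$ and a subset $S \subseteq \{x_1,\dots,x_n\}$, the ideal $I_S$ obtained by deleting from each minimal generator the variables not in $S$ (equivalently, the restriction of the associated simplicial complex to $S$) satisfies $\beta_{i,j}^{R_S}(R_S/I_S) \le \beta_{i,j}^{R}(R/I)$; this is a consequence of Hochster's formula, since the $S$-restriction of the Stanley--Reisner complex has reduced homology that injects appropriately, or alternatively of the deformation/polarization-style arguments. In our situation $I = I_3(G)$, $S = V(H)$, and $I_S = I_3(H)$ precisely because $H$ is induced. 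Applying this gives the Betti-number inequality for all $i,j$.

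Finally, the regularity statement is immediate from the Betti comparison: since $\reg(R_H/I_3(H)) = \max\{\,j - i : \beta_{i,j}^{R_H}(R_H/I_3(H)) \ne 0\,\}$ and every such nonzero Betti number forces $\beta_{i,j}^{R}(R/I_3(G)) \ne 0$ as well, we get $\reg(R_H/I_3(H)) \le \reg(R/I_3(G))$. The main obstacle in the write-up is not conceptual but bookkeeping: making the reduction "$H$ induced $\Rightarrow$ restricting generators to $V(H)$ recovers exactly $I_3(H)$" airtight (one must check that no $3$-path of $G$ with a repeated vertex or an extra chord sneaks in, which is exactly what "induced" rules out) and citing the right form of the Betti-number monotonicity under variable restriction.
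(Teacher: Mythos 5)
Your final ``cleanest packaging'' is correct, but it goes by a genuinely different route than the paper. The paper first proves $I_3(H)=I_3(G)\cap R_H$ (using, as you do, that $H$ being induced means a $3$-path of $G$ supported in $V(H)$ is a $3$-path of $H$), and then exhibits $R_H/I_3(H)$ as an algebra retract of $R/I_3(G)$ via the inclusion together with the map killing the variables of $V(G)\setminus V(H)$; the Betti-number inequality is then quoted from a general result on algebra retracts (cited in the paper as [HHO, Corollary 2.5]). You instead invoke monotonicity of graded Betti numbers under restriction of the Stanley--Reisner complex to a vertex subset, which indeed follows at once from Hochster's formula, since the sum computing $\beta_{i,j}$ of the restricted complex is a subsum of the one computing $\beta_{i,j}$ of the whole complex. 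Both proofs hinge on the same combinatorial input ($H$ induced $\Rightarrow I_3(G)\cap R_H=I_3(H)$); your route is more self-contained and elementary for squarefree monomial ideals, while the retract argument is shorter to state and applies beyond the squarefree monomial setting. The regularity conclusion is drawn identically in both.

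Two cautions before you write it up. First, the regular-sequence detour you (rightly) abandon is genuinely false, not merely delicate: the variables of $W=V(G)\setminus V(H)$ typically divide minimal generators of $I_3(G)$ and hence are zerodivisors on $R/I_3(G)$, and the asserted equality $\beta_{i,j}^{R_H}(R_H/I_3(H))=\beta_{i,j}^{R}(R/I_3(G))$ fails already when $H$ is a single vertex of a graph containing a $3$-path; none of that paragraph should survive. Second, your description of $I_S$ as ``deleting from each minimal generator the variables not in $S$'' is the wrong operation (that amounts to setting those variables equal to $1$); the restriction of the Stanley--Reisner complex to $S$ corresponds to $I\cap R_S$, i.e.\ keeping exactly the generators entirely supported on $S$ (equivalently, what your opening paragraph does by setting the variables outside $S$ to $0$), and it is this operation for which $I_S=I_3(H)$ holds when $H$ is induced and for which the Hochster-formula monotonicity applies. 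With those two points fixed, the argument is complete.
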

\begin{proof}
We prove that $R_H/I_3(H)$ is an algebra retract of $R/I_3(G)$. We first show that $I_3(H)=I_3(G)\cap R_H$. It is clear that $I_3(H) \subseteq I_3(G)\cap R_H$. For the converse part, let $f\in I_3(G)\cap R_H$. Suppose $f=\sum gh$ for $g\in R$ and $h\in I_3(G)$. We consider the mapping $\varphi:R\longrightarrow R_H$ by defining $\varphi(x)=x$ if $x\in V(H)$, otherwise $\varphi(x)=0$. Therefore,
	\begin{align*}
	\varphi(f) & = \sum \varphi(g)\varphi(h) \\
	&=\sum \varphi(g)h, \text{ where } h\in I_3(H).
	\end{align*}
	This implies that $f\in I_3(H)$, and so $I_3(H)=I_3(G)\cap R_H$. Thus, $R_H/I_3(H)$ is a $\mathbb{K}$-subalgebra of $R/I_3(G)$. Now consider $R_H/I_3(H) \xhookrightarrow{i}  R/I_3(G) \stackrel{\bar{\varphi}}\rightarrow R_H/I_3(H)$, where $\bar{\varphi}$ is the map induced by $\varphi$. It can be observed that $\bar{\varphi}\circ i$ is the identity map on $R_H/I_3(H)$. Hence, $R_H/I_3(H)$ is an algebra retract of $R/I_3(G)$.
	Now, the assertion follows
	from \cite[Corollary 2.5]{HHO}.
	\end{proof}
 As an immediate consequence, we have a lower bound of the regularity of $3$-path ideal of any graph $G$ in terms of the combinatorial invariants of $G$.
\begin{corollary}\label{cor:lower-bound}
Let G be a simple graph and $I_3(G)$ be its 3-path ideal. Then $$\reg(R/I_3(G))\geq 2\nu_3(G).$$
	\end{corollary}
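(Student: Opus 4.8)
The plan is to reduce everything to Proposition \ref{prop:induced-subgraph} applied to a maximum $3$-path induced matching. Set $t=\nu_3(G)$ and, by definition of the $3$-path induced matching number, choose an induced subgraph $H$ of $G$ that is a disjoint union of $t$ copies of $P_3$. By Proposition \ref{prop:induced-subgraph} we have $\reg(R/I_3(G))\geq \reg(R_H/I_3(H))$, so it suffices to show $\reg(R_H/I_3(H))=2t$.

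The next step is to pin down $I_3(H)$ explicitly. Write the vertex set of the $i$-th copy of $P_3$ as $\{x_{i,1},x_{i,2},x_{i,3}\}$ with edges $\{x_{i,1},x_{i,2}\}$ and $\{x_{i,2},x_{i,3}\}$. Since the copies are vertex-disjoint and $H$ is induced, there is no edge of $H$ between two different copies; hence the only $3$-paths in $H$ are $x_{i,1},x_{i,2},x_{i,3}$ for $1\leq i\leq t$, and $I_3(H)=\langle f_1,\dots,f_t\rangle$ with $f_i=x_{i,1}x_{i,2}x_{i,3}$. These $t$ cubic monomials involve pairwise disjoint sets of variables, so they form a regular sequence in $R_H$; thus $I_3(H)$ is a complete intersection and the Koszul complex on $f_1,\dots,f_t$ is the minimal graded free resolution of $R_H/I_3(H)$. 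Reading off the twists, $\beta_{i,j}(R_H/I_3(H))\neq 0$ forces $j=3i$ with $0\leq i\leq t$, so $\reg(R_H/I_3(H))=\max\{3i-i : 0\leq i\leq t\}=2t$. Equivalently, one can note $R_H/I_3(H)\cong\bigotimes_{i=1}^{t}\mathbb{K}[x_{i,1},x_{i,2},x_{i,3}]/(f_i)$, observe each tensor factor has regularity $3-1=2$, and use additivity of regularity over tensor products of graded $\mathbb{K}$-algebras.

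Combining the two steps gives $\reg(R/I_3(G))\geq \reg(R_H/I_3(H))=2t=2\nu_3(G)$, as desired. I do not anticipate a serious obstacle here: the only point requiring a small observation is that an induced disjoint union of $P_3$'s produces a complete intersection of cubics (so its regularity is exactly twice the number of components), and once Proposition \ref{prop:induced-subgraph} is available the inequality is immediate.
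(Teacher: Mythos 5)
Your proposal is correct and follows essentially the same route as the paper: restrict to the induced subgraph $H$ on the vertices of a maximum $3$-path induced matching, observe that $I_3(H)$ is a complete intersection of cubics so the Koszul complex gives $\reg(R_H/I_3(H))=2\nu_3(G)$, and conclude via Proposition \ref{prop:induced-subgraph}. The only difference is that you spell out the Koszul/tensor-product computation that the paper leaves implicit.
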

\begin{proof}
	For simplicity of notation, let $s=\nu_3(G)$. Suppose that $\{P_1,P_2,\dots,P_{s}\}$ is a $3$-path induced matching in $G$. Let $H$ be the induced subgraph of $G$ on the vertices $\cup_{i=1}^{s} V(P_i)$. Then $I_3(H)$ is a complete intersection. Thus by the Koszul complex, $\reg(R_H/I_3(H))=2\nu_3(G)$. Hence, the assertion follows from Proposition \ref{prop:induced-subgraph}.
	\end{proof}

	\section{Path Ideals of trees and Unicyclic Graphs}
	In this section, we consider the $3$-path ideal of trees and unicyclic graphs. In fact, we compute the exact regularity of $I_3(G)$ when $G$ is a tree, and for unicyclic graphs, we give a sharp upper bound of $\reg(I_3(G))$. We first prove some technical lemmas which will be needed to prove the main results.
	\begin{lemma}\label{lemm:tech-lemma}
	    Let $G$ be a simple graph and $I_3(G)$ be its $3$-path ideal. Let $e=\{x,y\}\in E(G)$. Then we have the followings:
	    \begin{enumerate}
	        \item $I_3(G):xy=L+J$, where $L=\left\langle N_G(e)\right\rangle$  and $J=I_3(G\setminus N_G[e])$.
	        \item $(I_3(G)+\langle xy \rangle):x= \langle y \rangle +I_2(H)+I_3(G\setminus N_G[x]),$ where $H$ is the union of $N_G^{\text edge}(x)$ and the complete graph on the vertex set $N_G(x)\setminus \{y\}$.
	    \end{enumerate}
	\end{lemma}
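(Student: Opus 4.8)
The plan is to compute both colon ideals directly by analyzing which monomial generators survive after the colon operation, using the standard fact that for a monomial ideal $I = \langle m_1, \dots, m_k \rangle$ and a monomial $u$, one has $I : u = \langle m_1/\gcd(m_1,u), \dots, m_k/\gcd(m_k,u) \rangle$.

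For part (1), I would take a generator $x_a x_b x_c$ of $I_3(G)$ coming from a $3$-path $a, b, c$ and divide by $\gcd$ with $xy$. I split into cases according to how $\{a,b,c\}$ meets $\{x,y\}$. If the $3$-path is disjoint from $e = \{x,y\}$, the generator is unchanged, so it survives as a generator of $J = I_3(G \setminus N_G[e])$ \emph{provided} none of $a,b,c$ lies in $N_G(e)$; if some vertex of the path lies in $N_G(e)$, then that vertex is adjacent to $x$ or $y$, so it contributes a generator that is divisible by one of the linear forms in $L = \langle N_G(e) \rangle$, hence is absorbed. If the $3$-path shares exactly one vertex with $\{x,y\}$, say it contains $x$ (the case of $y$ is symmetric), then the two remaining vertices of the path lie in $N_G(x) \subseteq N_G[e]$, at least one of them in $N_G(e)$; after dividing by $x$ the generator becomes a product involving a vertex of $N_G(e)$, again absorbed into $L$. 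Finally, if the path contains both $x$ and $y$ (which must be consecutive on the path since $\{x,y\} \in E(G)$), then the third vertex $w$ is a neighbor of $x$ or of $y$ other than the other endpoint, so $w \in N_G(e)$, and dividing $x y w$ by $xy$ leaves exactly the linear form $w$, a generator of $L$. Conversely, each linear form of $L$ and each generator of $J$ is visibly contained in $I_3(G) : xy$: for $w \in N_G(e)$, the product $xyw$ or the relevant path monomial times $w$ lies in $I_3(G)$ after multiplying by $xy$; and $J \subseteq I_3(G) \subseteq I_3(G):xy$ trivially. This gives both inclusions, hence (1).

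For part (2), I set $I' = I_3(G) + \langle xy \rangle$ and compute $I' : x$. The term $\langle xy \rangle : x = \langle y \rangle$ is immediate. For the $3$-path generators, a generator $x_a x_b x_c$ is affected only if $x \in \{a,b,c\}$; if $x$ is an endpoint of the $3$-path, say the path is $x, u, v$ with $u \in N_G(x)$, then after dividing by $x$ we are left with $uv$, an edge of the graph $N_G^{\mathrm{edge}}(x)$; and any such $\{u,v\}$ with $\{x,u,v\}$ a $3$-path arises this way, so these contribute exactly $I_2\big(N_G^{\mathrm{edge}}(x)\big)$ modulo the $\langle y \rangle$ term (when $u$ or $v$ equals $y$ the monomial $uv$ is already in $\langle y\rangle$). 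If $x$ is the middle vertex of the $3$-path $u, x, v$, then $u, v \in N_G(x)$ are distinct and after dividing by $x$ we get $uv$, which is exactly an edge of the complete graph on $N_G(x)\setminus\{y\}$ (again, if $u$ or $v$ equals $y$ the monomial is absorbed by $\langle y \rangle$); conversely every edge of that complete graph, i.e. every pair $u,v \in N_G(x)\setminus\{y\}$, gives a $3$-path $u,x,v$. These two families together are precisely $I_2(H)$ for $H$ the union described. Finally, a $3$-path generator with $x \notin \{a,b,c\}$ is unchanged by the colon; such a generator survives (not being in $\langle y\rangle + I_2(H)$) exactly when the path avoids $N_G[x]$, contributing $I_3(G \setminus N_G[x])$; if the path meets $N_G(x)$ at a vertex $u$ but not $x$ itself, then $\{x, u, t\}$ is a $3$-path for a suitable neighbor, so $u \in N_G(x)$ and the path monomial is divisible by a generator of $I_2(H)$ (paired with another neighbor of $x$) — here one must be slightly careful, and I expect this bookkeeping to be the one subtle point. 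Assembling all the pieces yields $I' : x = \langle y \rangle + I_2(H) + I_3(G \setminus N_G[x])$.

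The main obstacle is not conceptual but organizational: making the case analysis in part (2) genuinely exhaustive, in particular verifying that a $3$-path meeting $N_G(x)$ but not containing $x$ really does get absorbed into $I_2(H) + \langle y\rangle$, and that no generator is double-counted or wrongly discarded. Once the generator-by-generator matching is laid out cleanly in the four cases (path disjoint from $N_G[x]$; path with $x$ as endpoint; path with $x$ in the middle; path meeting $N_G(x)$ only), both inclusions follow, and I would write up (1) first as the template since its case analysis is the shorter of the two.
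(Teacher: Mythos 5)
Your plan is correct and follows essentially the same route as the paper: both parts are proved by a direct monomial computation of the colon ideal, analyzing how a $3$-path generator meets $\{x,y\}$ (respectively $x$ and $N_G(x)$), which is exactly the paper's argument — and you even make explicit the case of a path meeting $N_G(e)$ (or $N_G(x)$) without containing $x$ or $y$, which the paper handles only implicitly. When writing it up, just fix two harmless misstatements: if a $3$-path contains $x$ as an endpoint, only the path-neighbor of $x$ (not both remaining vertices) need lie in $N_G(x)$, and a $3$-path containing both $x$ and $y$ need not have them consecutive (e.g.\ $x$--$w$--$y$); in each case the vertex you need still lies in $N_G(e)$, and in part (2) the absorbing edge is $\{u,t\}$ with $t$ the path-neighbor of $u$ (an element of $N_G^{\text edge}(x)$), not a pair of neighbors of $x$, so all your conclusions stand.
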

	\begin{proof}
	    (1): Clearly $L+J\subset I_3(G):xy$. On the other side, let $u\in I_3(G):xy$. This implies that $uxy\in I_3(G)$, and hence there exists a minimal monomial generator $v\in I_3(G)$ such that $v\mid uxy$. If $v\nmid u$, then $\gcd(v, xy)\neq 1$. This forces that $\supp(u)\cap N_G(e)\neq \emptyset$. This gives $u\in L$, and hence $I_3(G):xy=L+J$. 
	        %If there exists a $z\in N_G(e)$ such that $z\mid u$, then $u\in L$. On the other side  
	        
	        (2): It can be easily seen that  $\langle y \rangle +I_2(H)+I_3(G\setminus N_G[x])\subset (I_3(G)+\langle xy \rangle):x$. For the converse part, let $u \in (I_3(G)+\langle xy \rangle):x$. Then $ux\in I_3(G)+\langle xy \rangle$. Assume that $y\nmid u$. Then $ux\in I_3(G)$, and hence there exists a minimal monomial generator $v\in I_3(G)$ such that $v\mid ux$. If $v\nmid u$, then $x\mid v$. Let $v=xv_1$. Since $v$ is $3$-path in $G$, hence $x\in N_G(v_1)$. Now, if $v_1$ is an edge in $G$, then $v_1\in N^\text{edge}_G(x)$, on the other hand $\supp(v_1)\subset N_G(x)$ which further implies that $u\in I_2(H)$. This yields that $(I_3(G)+\langle xy \rangle):x\subset \langle y \rangle +I_2(H)+I_3(G\setminus N_G[x])$, and hence $$(I_3(G)+\langle xy \rangle):x= \langle y \rangle +I_2(H)+I_3(G\setminus N_G[x])$$
 which completes the proof.
 \end{proof}
	
	We recall a property satisfied by a tree from \cite[Proposition 4.1]{Jacques-Katzman}.
\begin{remark}\label{tree-property}
	If $G$ is a tree containing a vertex of degree at least two, then there exists a vertex $v\in V(G)$ with $N_G(v)=\{v_1,\dots,v_r\}$, where $r\geq 2$ and $\deg_G(v_i)=1$ for $i< r$.
\end{remark}
	
%To prove the exact regularity of path ideals of trees, we need the following lemma.
%\begin{lemma}\label{tree-property}
%	If $G$ is a tree which is not a star graph, then there exists vertex $v\in V(G)$ such that only one vertex in $N_G(v)$ is of degree more than one. 
%\end{lemma}
%\begin{proof}	
%	We proceed by induction on $|V(G)|$. If $1\leq |V(G)|\leq 3$, then $G$ is a star graph. So assume that $|V(G)|=4$. In this case, $G=P_4$, a path graph, and so our assertion is true. Suppose now $|V(G)|\geq 5$. Let $\{u,u'\}$ be an pendant edge in $G$ such that $u$ is the pendant vertex. Consider the graph $G\setminus u$. If $G\setminus u$ is a star graph centered at $w$, then $u'$ is the required vertex for $G$ as the only vertex $w\in N_G(u')$ is of degree more than one. Suppose $G\setminus u$ is not a star graph. Then by inductive hypothesis, there exists a vertex $w\in V(G\setminus u)$ such that there is only one vertex in $N_{G\setminus u}(w)$ of degree more than one. If $u'\notin N_{G\setminus u}(w)$, then take $v=w$ otherwise, take $v=u'$ and $v$ works for $G$. This completes the proof.
%	\end{proof}

\begin{lemma}\label{lemm:tech-lemma2}
    Let $G$ be a tree and $v\in V(G)$ with the notation as in Remark \ref{tree-property}. Then 
    $$\nu_3(G\setminus N_G[e])\leq \nu_3(G)-1,\text{ where } e=\{v,v_r\}$$
    %where $e=\{v,v_r\}.$
\end{lemma}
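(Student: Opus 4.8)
The plan is to establish the stronger inequality $\nu_3(G)\geq \nu_3(G\setminus N_G[e])+1$ by an explicit extension argument: I take a maximum $3$-path induced matching of $G\setminus N_G[e]$ and adjoin to it one more path supported on $N_G[e]$. Write $s=\nu_3(G\setminus N_G[e])$ and fix a $3$-path induced matching $\mathcal{M}=\{P_1,\dots,P_s\}$ of $G\setminus N_G[e]$. Since $G\setminus N_G[e]$ is an induced subgraph of $G$ and $\bigcup_i V(P_i)\subseteq V(G)\setminus N_G[e]$, the subgraph of $G$ induced on $\bigcup_i V(P_i)$ agrees with the one induced in $G\setminus N_G[e]$; hence $\mathcal{M}$ is already a $3$-path induced matching of $G$.

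Next I would take $P$ to be the path $v_1 - v - v_r$. Recall that $r\geq 2$ and $\deg_G(v_1)=1$ by the choice of $v$, and that $e=\{v,v_r\}$, so $N_G[e]=N_G[v]\cup N_G[v_r]$. The vertices $v_1,v,v_r$ are pairwise distinct, $\{v_1,v\},\{v,v_r\}\in E(G)$, and $\{v_1,v_r\}\notin E(G)$ because a tree has no triangles; thus $G[\{v_1,v,v_r\}]=P$ is a genuine $3$-path with $V(P)\subseteq N_G[v]\subseteq N_G[e]$. I then claim $\mathcal{M}\cup\{P\}$ is a $3$-path induced matching of $G$. Disjointness of $P$ from each $P_i$ is immediate since $V(P)\subseteq N_G[e]$ is disjoint from $\bigcup_i V(P_i)$. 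For the induced condition it suffices to exclude edges between $V(P)$ and $\bigcup_i V(P_i)$: every neighbor of $v$ lies in $N_G[v]$, every neighbor of $v_r$ lies in $N_G[v_r]$, and the unique neighbor of the leaf $v_1$ is $v$, so all neighbors of the vertices of $P$ lie in $N_G[e]$ and hence avoid $\bigcup_i V(P_i)$. Therefore $\mathcal{M}\cup\{P\}$ is a $3$-path induced matching of $G$ of size $s+1$, giving $\nu_3(G)\geq s+1=\nu_3(G\setminus N_G[e])+1$, which is the assertion.

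There is no serious obstacle in this argument; the only point requiring care is the verification of the ``induced'' property of $\mathcal{M}\cup\{P\}$, and that is exactly where the hypotheses enter: the leaf condition $\deg_G(v_1)=1$ forces the single neighbor of $v_1$ into $N_G[e]$, and the identity $N_G[e]=N_G[v]\cup N_G[v_r]$ together with the triangle-freeness of $G$ handles the neighbors of $v$ and $v_r$ and ensures $P$ itself is induced.
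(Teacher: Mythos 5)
Your argument is correct and coincides with the paper's own proof: both extend a maximum $3$-path induced matching of $G\setminus N_G[e]$ by the path $v_1\,\text{--}\,v\,\text{--}\,v_r$, using the leaf condition on $v_1$ and the fact that all neighbors of $v,v_r$ lie in $N_G[e]$ to verify the induced condition. Your write-up is, if anything, slightly more explicit than the paper's in checking that $P$ itself is induced (no edge $\{v_1,v_r\}$, by acyclicity) and that no edges run between $P$ and the old matching.
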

\begin{proof}
    Let $\nu_3(G\setminus N_G[e])=s$ and $P_1,\dots,P_s$ be a $3$-path induced matching in $G\setminus N_G[e]$. Since $\deg_G(v_1)=1$, $N_G(v_1)\cap V(P_i)=\emptyset$ for $1\leq i\leq s$. Also, note that for $1\leq i\leq s$, $N_G(v)\cap V(P_i)=\emptyset=N_G(v_r)\cap V(P_i)$. Therefore, $\{v_1,v,v_r\},P_1,\dots,P_s$ is a $3$-path induced matching in $G$, and hence $\nu_3(G)\geq s+1$.
\end{proof}
%\begin{remark}\label{tech-remark}
%		Since $G\setminus \{N_G[v]\cup N_G[v_r]\cup N_G(w)\}$ is an induced subgraph of $G\setminus \{N_G[v]\cup N_G[v_r]\}$ for any $w$, it follows from Remark \ref{rmk:subgraph-nu} and Lemma \ref{lemm:tech-lemma2}  that $$\nu_3(G\setminus \{N_G[v]\cup N_G[v_r]\cup N_G(w)\})\leq \nu_3(G\setminus \{N_G[v]\cup N_G[v_r]\})\leq \nu_3(G)-1.$$
%	\end{remark}
Now we are ready to prove the exact regularity formula for the $3$-path ideals of trees.
	\begin{theorem}\label{thm:trees}
		Let G be a tree and $I_3(G)$ be its 3-path ideal. Then $\reg(R/I_3(G))=2\nu_3(G)$.
	\end{theorem}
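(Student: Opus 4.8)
By Corollary~\ref{cor:lower-bound} we already have $\reg(R/I_3(G))\ge 2\nu_3(G)$, so the whole content of the statement is the reverse inequality $\reg(R/I_3(G))\le 2\nu_3(G)$. The plan is to prove this by induction on $|V(G)|$. Since deleting a vertex from a tree produces a forest, and since both $\nu_3(-)$ and $\reg(R/I_3(-))$ are additive over connected components, it is convenient to prove the upper bound for all forests $G$. If $G$ has no vertex of degree $\ge 2$ then it has no $3$-path, $I_3(G)=(0)$, and both sides vanish; this is the base case. For the inductive step, fix a tree component of $G$ containing a vertex of degree $\ge 2$ and use Remark~\ref{tree-property} to choose $v\in V(G)$ with $N_G(v)=\{v_1,\dots,v_r\}$, $r\ge 2$, $\deg_G(v_i)=1$ for $i<r$; set $e=\{v,v_r\}$ and $S=N_G(v_r)\setminus\{v\}$.

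The engine is a cascade of short exact sequences $0\to (R/(I:m))(-\deg m)\to R/I\to R/(I+\langle m\rangle)\to 0$, each reducing a regularity estimate to estimates on a colon ideal and a sum. Start with $m=vv_r$. By Lemma~\ref{lemm:tech-lemma}(1), $I_3(G):vv_r=\langle N_G(e)\rangle+I_3(G\setminus N_G[e])$, so its regularity equals $\reg(R_{G\setminus N_G[e]}/I_3(G\setminus N_G[e]))$, which by the inductive hypothesis together with Lemma~\ref{lemm:tech-lemma2} is at most $2\nu_3(G\setminus N_G[e])\le 2\nu_3(G)-2$; hence the shifted colon term contributes at most $2\nu_3(G)$, and it remains to bound $\reg(R/(I_3(G)+\langle vv_r\rangle))$. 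Here is the delicate point: one should split $R/(I_3(G)+\langle vv_r\rangle)$ by $v_r$, \emph{not} by $v$. The resulting quotient piece is $R/(\langle v_r\rangle+I_3(G\setminus v_r))$, of regularity $\le 2\nu_3(G\setminus v_r)\le 2\nu_3(G)$ by induction and Remark~\ref{rmk:subgraph-nu}. The colon piece, by Lemma~\ref{lemm:tech-lemma}(2) with $x=v_r$ and $y=v$, is $R/(\langle v\rangle+I_2(H)+I_3(G\setminus N_G[e]))$, where $H$ is the union of $N_G^{\text{edge}}(v_r)$ and the complete graph on $S$ (this is what the lemma produces, up to ignoring the now-isolated vertices $v_1,\dots,v_{r-1}$); for this piece the target is $\reg\le 2\nu_3(G)-1$.

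To reach that target I would peel off the vertices $s_1,\dots,s_k$ of $S$ one at a time, splitting the ideal $\langle v\rangle+I_2(H)+I_3(G\setminus N_G[e])$ successively by $s_1,s_2,\dots$. When one colons by $s_j$, the generators of $I_2(H)$ still involving $s_j$ only contribute variables, which get absorbed, and the colon ideal collapses to the form $\langle\,\text{variables}\,\rangle+I_3(W_j)$ for an induced subgraph $W_j$ of $G\setminus N_G[e]$. Because $v_1$ is a leaf and neither $v$ nor $v_r$ is adjacent to any vertex of $W_j$, the $3$-path $\{v_1,v,v_r\}$ extends every $3$-path induced matching of $W_j$, so $\nu_3(W_j)\le\nu_3(G)-1$; the inductive hypothesis then bounds this colon term by $2\nu_3(G)-2$, which absorbs the degree-$1$ shift. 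After all of $S$ has been removed, the surviving quotient is $R/(\langle v\rangle+\langle s:s\in S\rangle+I_3(G\setminus N_G[e]))$, again of regularity $\le 2\nu_3(G\setminus N_G[e])\le 2\nu_3(G)-2$ by Lemma~\ref{lemm:tech-lemma2}. Combining the estimates along the whole cascade gives $\reg(R/(I_3(G)+\langle vv_r\rangle))\le 2\nu_3(G)$, and together with the first sequence this yields $\reg(R/I_3(G))\le 2\nu_3(G)$, completing the induction.

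The step I expect to be the real obstacle is exactly the passage from $R/I_3(G)$ to $R/(I_3(G)+\langle vv_r\rangle)$ and the choice of how to split it: the naive attempt — split by $v$ — fails, because then the colon ideal carries a full copy of $I_3(G\setminus N_G[v])$, whose regularity may already equal $2\nu_3(G)$ (take $G=P_6$ with $v$ a neighbour of an endpoint), so the degree shift destroys the bound. Making the split at $v_r$ work forces the somewhat fussy bookkeeping with the mixed ideal $I_2(H)+I_3(G\setminus N_G[e])$, and in every branch the combinatorial crux is the single auxiliary $3$-path $\{v_1,v,v_r\}$ forcing the drop $\nu_3(W_j)\le\nu_3(G)-1$ — precisely the phenomenon packaged in Lemma~\ref{lemm:tech-lemma2}.
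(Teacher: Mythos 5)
Your proposal is correct and follows essentially the same route as the paper's proof: induction via Remark~\ref{tree-property}, the short exact sequences coloning first by $vv_r$ and then by $v_r$, Lemma~\ref{lemm:tech-lemma}(1)--(2) to identify the colon ideals, and Lemma~\ref{lemm:tech-lemma2} (with Remark~\ref{rmk:subgraph-nu}) to force the drop $\nu_3\le\nu_3(G)-1$. The only cosmetic difference is that you peel off all of $N_G(v_r)\setminus\{v\}$ uniformly, whereas the paper peels only the non-leaf neighbours and disposes of the remaining complete-graph part of $I_2(H)$ via Fr\"oberg's theorem in its Cases I--II.
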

\begin{proof}
	By Corollary \ref{cor:lower-bound}, it is enough to prove that $\reg(R/I_3(G))\leq 2\nu_3(G)$.
	
	We proceed by induction on $|V(G)|$. By Remark \ref{tree-property}, let $v$ be such a vertex of $G$ such that $N_G(v)=\{v_1,\dots,v_r\}$ and $\deg_G(v_i)=1$ for $i<r$, where $r\geq 2$. Set $u=v_r$, and let $N_G(u)=\{u_1=v,u_2,\dots,u_s\}$ for $s\geq 1$. Consider the following short exact sequence:
	\begin{align}\label{eqn-1}
	0\longrightarrow \frac{R}{I_3(G):uv}(-2)\stackrel{\cdot uv}{\longrightarrow} \frac{R}{I_3(G)}\longrightarrow \frac{R}{\langle uv,I_3(G)\rangle }\longrightarrow 0.
\end{align}
	Using Lemma \ref{lemm:tech-lemma}(1), we have $$I_3(G):uv=\langle v_i:1\leq i\leq r-1\rangle +\langle u_j:2\leq j\leq s\rangle +I_3(G\setminus N_G[e]), \text{ where } e=\{u,v\}.$$ By virtue of Lemma \ref{lemm:tech-lemma2}, we have $\nu_3(G\setminus N_G[e]) \leq \nu_3(G)-1$.
	Therefore, by inductive hypothesis, $\reg(R/(I_3(G):uv))\leq 2\nu_3(G\setminus N_G[e])\leq 2\nu_3(G)-2.$ Now, set $J=\langle uv,I_3(G)\rangle$ and consider the following short exact sequence:
	\begin{align}\label{eqn-2}
	0\longrightarrow \frac{R}{J:u}(-1)\stackrel{\cdot u}{\longrightarrow} \frac{R}{J}\longrightarrow \frac{R}{\langle u, J\rangle}\longrightarrow 0.
	\end{align}
	Observe that, $\langle u, J\rangle=\langle u \rangle +I_3(G\setminus\{u\})$. By Remark \ref{rmk:subgraph-nu}, $\nu_3(G\setminus \{u\})\leq \nu_3(G).$ Thus it follows from  inductive hypothesis that $\reg(R/\langle u, J\rangle)\leq 2\nu_3(G\setminus \{u\})\leq 2\nu_3(G).$ On the other hand,
	by Lemma \ref{lemm:tech-lemma}(2), $J:u=\langle v\rangle +I_2(H)+I_3(G\setminus N_G[u]),$ where $H$ is the union of $N_G^{\text edge}(u)$ and the complete graph on the vertex set $N_G(u)\setminus \{v \}$.\\
	\emph{Case-I:} If $s=1$, then $N_G^{\text edge}(u)=\{\{v,v_i\}:1\leq i\leq r-1 \}$. Also, note that the graph $G\setminus N_G[u]$ does not have an edge. This implies that $J:u=\langle v\rangle $, and hence $\reg\left(S/(J:u)\right)=0\leq 2\nu_3(G)-1$.\\
	\emph{Case-II:} Suppose $s\geq 2$ and $\deg(u_i)=1$ for $2\leq i\leq s$. Then  $H$ is the union of $N_G^{\text edge}(u)=\{\{v,v_i\}:1\leq i\leq r-1 \}$ and the complete graph on the vertex set $\{u_2,\dots, u_s\}$. In this case, the graph $G\setminus N_G[u]$ has no edge. Thus, $J:u=\langle v\rangle +I_2(H)$. Since $H$ is co-chordal, by \cite[Theorem 1]{Frob90}, $I_2(H)$ has a linear resolution. Hence, we get $\reg(R/(J:u))=1\leq 2\nu_3(G)-1$.

	Therefore, it follows from \cite[Corollary 18.7]{Peeva} applying to the short exact sequences \eqref{eqn-1} and \eqref{eqn-2} that 
	\[ \reg(R/I_3(G))\leq \max \{ \reg(R/(I_3(G):uv))+2,\reg(R/(J:u))+1,\reg(R/\langle u,J\rangle )\}.
	\]
	Hence, $\reg(R/J)\leq 2\nu_3(G)$.\\
	\emph{Case-III:} Suppose now $s\geq 2$ and $\deg_G(u_i)\geq 2$ for some $2\leq i\leq s$. Without loss of generality, we assume that $\deg_G(u_i)\geq 2$ for all $2\leq i\leq t$, and $\deg(u_i)=1$ for all $t+1\leq i\leq s$.
	In this case, $H$ is a union of the edges $N_G^{\text{edge}}(u)$ and the complete graph on the vertex set $\{u_2,\dots, u_s\}$, i.e., $$J:u=\langle v\rangle +\langle u_iu_j: 2\leq i< j\leq s\rangle +\sum\limits_{i=2}^t\left\langle u_ix :  x\in N_G(u_i)\setminus\{u\}\right\rangle+I_3\left(G\setminus N_G[u]\right). $$ % $H$ is a complete graph with some vertex having whiskers.  %In particular, $H$ is co-chordal graph, and hence $I(H)$ has a linear resolution. 
	%On the other hand, we have $J:u=\langle v\rangle +I_2(H)+I_3(G\setminus N_G(u)).$ 
Set $J_1=J:u$ and $J_{i}=J_{1}+\langle u_2,\dots,u_{i}\rangle $ for $2\leq i\leq t$. For $2\leq i\leq t$, consider the following short exact sequence:
	\begin{align}\label{eqn-3}
	0\longrightarrow \frac{R}{J_{i-1}:u_i}(-1)\stackrel{\cdot u_i}{\longrightarrow} \frac{R}{J_{i-1}}\longrightarrow \frac{R}{J_{i}}\longrightarrow 0.
	\end{align}
	
	By Lemma \ref{lemm:tech-lemma}, $J:uu_i=\langle u_j:1\leq j\leq s \text{ and } j\neq i\rangle+ \langle w: w\in N_G(u_i)\setminus \{u\}\rangle +I_3(G\setminus N_G[\{u,u_i\}]$. Therefore, 
 \begin{align*}
     J_{i-1}:u_i & =
 (J_{1}+\langle u_2,\dots,u_{i-1}\rangle ):u_i=
 (J:uu_i)+\langle u_2,\dots,u_{i-1}\rangle \\ 
 &= \langle u_j:1\leq j\leq s, \ j\neq i\rangle+\langle w: w\in N_G(u_i)\setminus \{u\}\rangle +I_3(G\setminus N_G[\{u,u_i\}].
 \end{align*}
 Further, we can write
 $$J_{i-1}:u_i= \langle u_j:1\leq j\leq s, \ j\neq i\rangle +\langle w: w\in N_G(u_i)\setminus \{u\}\rangle +I_3(G\setminus \{N_G[v]\cup N_G[u]\cup N_G[u_i]\})$$
 for $2\leq i\leq t$. Also, $J_{t}=\langle v,u_2,\dots,u_t\rangle +I_3(G\setminus \{N_G[v]\cup N_G[u]\})$. Now it follows from \cite[Corollary 18.7]{Peeva} applying to the short exact sequence \eqref{eqn-3} that 
	\begin{align*}
	\reg(R/J_1)\leq \max \{\reg(R/(J_{i-1}:u_i))+1, \reg(R/J_{t}):2\leq i\leq t \}.
	\end{align*}
	Since $G\setminus \{N_G[v]\cup N_G[u]\cup N_G[u_i]\}$ is an induced subgraph of $G\setminus N_G[e]$, by Remark \ref{rmk:subgraph-nu} and Lemma \ref{lemm:tech-lemma2}, we have $\nu_3(G\setminus \{N_G[v]\cup N_G[u]\cup N_G[u_i]\})\leq \nu_3(G\setminus \{N_G[v]\cup N_G[u]\})\leq \nu_3(G)-1$.
 %and $\nu_3(G\setminus \{N_G[v]\cup N_G[u]\})\leq \nu_3(G)-1$.
	
	By inductive hypothesis, $\reg(R/(J_{i-1}:u_i))\leq 2\nu_3(G\setminus \{N_G[v]\cup N_G[u]\cup N_G[u_i]\})\leq 2(\nu_3(G)-1)$ and 
	$\reg(R/J_{t})\leq 2\nu_3(G\setminus \{N_G[v]\cup N_G[u]\})\leq 2\nu_3(G)-2$.
By applying \cite[Corollary 18.7]{Peeva} to the above short exact sequences, we have
	\begin{align*}
	& \reg(R/I_3(G))  \leq \max \{\reg(R/(I_3(G):uv))+2,\reg(R/J) \} \\
	& \ \ \ \ \ \ \ \ \ \ \ \ \ \ \ \ \ \ \leq \max \{\reg(R/(I_3(G):uv))+2,\reg(R/\langle u,J\rangle ),\reg(R/J_1)+1 \} \\
	& \ \ \ \ \ \ \ \ \ \ \leq \max \{\reg(R/(I_3(G):uv))+2,\reg(R/\langle u,J\rangle ),\reg(R/(J_1:u_2))+2,\reg(R/J_2)+1 \} \\
	& \ \ \ \ \ \leq \max_{2\leq i\leq t} \{\reg(R/(I_3(G):uv))+2,\reg(R/\langle u,J\rangle ),\reg(R/(J_{i-1}:u_i))+2,\reg(R/J_{t})+1\}.
	\end{align*}
	Hence, the assertion follows.
	
	\end{proof}
 
Now we proceed to study the regularity of $3$-path ideal of unicyclic graphs. If $G$ is a cycle, then the regularity of $R/I_3(G)$ has been computed in \cite{AF15}. So, we assume that $G$ is not a cycle. We give a sharp upper bound for the regularity of $R/I_3(G)$. The idea of the proof is kind of similar to the proof of Theorem \ref{thm:trees}. We fix the following notation for unicyclic graphs.
%and then we prove one technical lemma which will be used to prove the main theorem for the unicyclic graph.
\begin{notation}\label{notation-unicyclic}
    Let $G$ be a unicyclic graph with the induced cycle $C$. Then trees are attached to at least one vertex of $C$, say $u\in V(C)$. Let $v\in N_G(u)\setminus V(C)$ and $e=\{u,v\}$. Clearly $N_G(u)\setminus \{v\}$ contains at least $2$ vertices and set $N_G(u)=\{u_1=v,u_2,\dots,u_t\}$ for $t\ge 3$.
\end{notation}

%\begin{lemma}\label{tech-unicyclic}
  %  Let $G$ be a unicyclic graph with the notation as in Notation \ref{notation-unicyclic}. Then $\nu_3(G\setminus \{N_G[u]\cup N_G[v]\})\leq \nu_3(G)-1$.
%\end{lemma}
%\begin{proof}
 %   By Remark \ref{rmk:subgraph-nu} to get 
	%$\nu_3(G\setminus \{N_G[u]\cup N_G[v\})\leq \nu_3(G)$.
%\end{proof}
  
%contains the cycle $C_k$ where $V(C_k)=\{u=u_1,\dots,u_k\}$. 
\begin{theorem}\label{thm:unicyclic}
	Let G be a unicyclic graph and $I_3(G)$ be its 3-path ideal. Then $$2\nu_3(G)\leq \reg(R/I_3(G))\leq 2\nu_3(G)+2.$$
\end{theorem}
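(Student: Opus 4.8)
The lower bound $2\nu_3(G)\le \reg(R/I_3(G))$ is immediate from Corollary \ref{cor:lower-bound}, so the entire work is the upper bound $\reg(R/I_3(G))\le 2\nu_3(G)+2$. I would proceed by induction on $|V(G)|$, mimicking the proof of Theorem \ref{thm:trees}, but keeping careful track of the slack of $2$ that is lost when cutting the cycle. The base cases are small unicyclic graphs (or, when $G$ is itself a cycle, the formula of \cite{AF15}), which can be checked directly to satisfy the bound. For the inductive step I would distinguish whether the tree parts of $G$ are ``large'' or not. If $G$ has a vertex $v$ as in Remark \ref{tree-property} lying in one of the attached trees (away from the cycle $C$), then exactly the same sequence of short exact sequences \eqref{eqn-1}, \eqref{eqn-2}, \eqref{eqn-3} as in Theorem \ref{thm:trees} applies: the ideals appearing are $I_3$ of induced subgraphs (each of which is either a tree, a smaller unicyclic graph, or a disjoint union of such), the auxiliary colon ideals $J:u$, $J_{i-1}:u_i$ are sums of variables, an $I_2$ of a co-chordal graph (regularity $1$), and $I_3$ of a strictly smaller graph. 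Whenever the induced subgraph obtained is again unicyclic we invoke the inductive hypothesis with the $+2$, and whenever it is a forest we invoke Theorem \ref{thm:trees} with no slack; the arithmetic in \cite[Corollary 18.7]{Peeva} then yields $\reg(R/I_3(G))\le 2\nu_3(G)+2$, using the analogue of Lemma \ref{lemm:tech-lemma2} for unicyclic graphs.

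The substantive case, and the place where the $+2$ is genuinely needed, is when every attached tree is short, so that one must instead peel off a vertex $u\in V(C)$ carrying a pendant (Notation \ref{notation-unicyclic}). Here I would run the short exact sequence
\[
0\longrightarrow \frac{R}{I_3(G):uv}(-2)\stackrel{\cdot uv}{\longrightarrow}\frac{R}{I_3(G)}\longrightarrow\frac{R}{\langle uv, I_3(G)\rangle}\longrightarrow 0,
\]
and then the sequence for $\langle u, \langle uv, I_3(G)\rangle\rangle$ as in \eqref{eqn-2}. By Lemma \ref{lemm:tech-lemma}(1), $I_3(G):uv = \langle N_G(e)\rangle + I_3(G\setminus N_G[e])$, and $G\setminus N_G[e]$ is obtained from $G$ by deleting $u$, $v$ and the neighbors of $u$ on the cycle, hence is a forest; Theorem \ref{thm:trees} gives $\reg(R/(I_3(G):uv))\le 2\nu_3(G\setminus N_G[e])$, and an induced-matching argument (put the $3$-path $\{v,u,u_2\}$ or a suitable $3$-path through $u$ together with a maximal matching of $G\setminus N_G[e]$) shows $\nu_3(G\setminus N_G[e])\le \nu_3(G)-1$, so this term contributes $\le 2\nu_3(G)-2$, and $+2$ from the shift gives $2\nu_3(G)$. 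For $\langle u, I_3(G), uv\rangle = \langle u\rangle + I_3(G\setminus\{u\})$: deleting $u$ breaks the cycle, so $G\setminus\{u\}$ is a forest, and Theorem \ref{thm:trees} with Remark \ref{rmk:subgraph-nu} gives $\reg \le 2\nu_3(G\setminus\{u\})\le 2\nu_3(G)$. Finally $J:u = \langle v\rangle + I_2(H) + I_3(G\setminus N_G[u])$ by Lemma \ref{lemm:tech-lemma}(2); here $G\setminus N_G[u]$ is again a forest, $H$ is a union of edges together with a clique on $N_G(u)\setminus\{v\}$, and I would either argue $H$ is co-chordal (regularity of $I_2(H)$ equals $1$, or at worst a small constant) or, if the tree parts force $\deg u_i\ge 2$, run the further decomposition \eqref{eqn-3} exactly as in Case-III of Theorem \ref{thm:trees}. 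The cumulative bound from \cite[Corollary 18.7]{Peeva} is then $\max\{2\nu_3(G),\, 2\nu_3(G),\, (\text{something}\le 2\nu_3(G))+1\}\le 2\nu_3(G)+2$ — and it is precisely the reassembly of the clique/cycle contribution (the $I_2(H)$ term, or the leftover $3$-paths through the cycle that are not part of an \emph{induced} matching) that can cost up to $2$ above $2\nu_3(G)$, accounting for the sharpness exhibited in Example \ref{example}.

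The main obstacle I anticipate is bookkeeping in the cycle case: one must verify that in every branch of the recursion the graph that reappears is either a forest, or a unicyclic graph on strictly fewer vertices, or a disjoint union of such (so that $\reg$ is additive and the inductive hypothesis applies with only a single $+2$, never $+4$), and that the $\nu_3$-drop of at least $1$ really does occur whenever a $3$-path through $u$ or $v$ is removed — this requires the unicyclic analogue of Lemma \ref{lemm:tech-lemma2}, whose proof is a short induced-matching check but must be stated carefully because $u$ lies on the cycle. A secondary technical point is controlling $\reg(R/I_2(H))$ where $H$ mixes a clique on the cycle-side neighbors of $u$ with pendant edges; showing $H$ is co-chordal (hence $\reg(R/I_2(H))=1$ by \cite[Theorem 1]{Frob90}) handles it, but one should double-check co-chordality when several $u_i$ have their own pendants, possibly falling back to the explicit Case-III decomposition rather than a direct co-chordality claim.
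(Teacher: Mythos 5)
Your overall skeleton (the colon/quotient exact sequences \eqref{eqn-1}, \eqref{eqn-2}, \eqref{eqn-3} applied at a cycle vertex $u$ carrying a pendant tree, with all resulting graphs handled by Theorem \ref{thm:trees}) is the same as the paper's, but two of your key intermediate claims are wrong, and the paper's proof specifically avoids both. First, the ``unicyclic analogue of Lemma \ref{lemm:tech-lemma2}'', i.e.\ $\nu_3(G\setminus N_G[e])\leq \nu_3(G)-1$ for $e=\{u,v\}$ with $u\in V(C)$, is false in general, and your proposed proof of it breaks exactly where you anticipate trouble: the third vertex of the path $\{v,u,u_2\}$ lies on the cycle, and its surviving neighbours may meet the induced matching of $G\setminus N_G[e]$, so the union need not be induced. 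Concretely, take the triangle $u,a,b$, attach a pendant $v$ at $u$ and paths $a\!-\!p_1\!-\!p_2\!-\!p_3$ and $b\!-\!q_1\!-\!q_2\!-\!q_3$. Then $G\setminus N_G[e]$ is the disjoint union of the two $3$-paths $p_1p_2p_3$ and $q_1q_2q_3$, so $\nu_3(G\setminus N_G[e])=2$, while $\nu_3(G)=2$ as well (any three disjoint induced $3$-paths would have to use $u$, hence $\{v,u,a\}$ or $\{v,u,b\}$, and then $b$ resp.\ $a$ and the adjacent $p_1$ resp.\ $q_1$ obstruct the remaining two paths). The paper never claims this drop: it only uses $\nu_3(G\setminus N_G[e])\leq \nu_3(G)$ from Remark \ref{rmk:subgraph-nu}, and the $+2$ in the shift of \eqref{eqn-1} is one of the places where the slack of $2$ genuinely enters.

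Second, your final bookkeeping, $\max\{2\nu_3(G),\,2\nu_3(G),\,(\text{something}\leq 2\nu_3(G))+1\}$, evaluates to $2\nu_3(G)+1$; if your argument were valid it would prove $\reg(R/I_3(G))\leq 2\nu_3(G)+1$, which is contradicted by $G_3$ in Example \ref{example}, where $\reg(R/I_3(G_3))=6=2\nu_3(G_3)+2$. The error is in the $J:u$ branch: the colon ideals $J_{i-1}:u_i$ in \eqref{eqn-3} involve the forests $G\setminus\{N_G[u]\cup N_G[u_i]\}$, whose $\nu_3$ is again only bounded by $\nu_3(G)$ (no drop, for the same reason as above, since $u_i$ lies on the cycle), so $\reg(R/(J:u))\leq 2\nu_3(G)+1$, and the additional shift in \eqref{eqn-2} pushes this branch up to $2\nu_3(G)+2$; in $G_3$ this is exactly the branch that attains the bound. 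Once these two points are corrected, your argument collapses to the paper's: in particular no induction on $|V(G)|$ and no ``large tree part'' case are needed, because with $u$ chosen on the cycle every graph appearing in the decomposition ($G\setminus N_G[e]$, $G\setminus\{u\}$, $G\setminus N_G[u]$, $G\setminus\{N_G[u]\cup N_G[u_i]\}$) is a forest and Theorem \ref{thm:trees} applies directly.
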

\begin{proof}
	Let $G$ be a unicyclic graph with the notation as in Notation \ref{notation-unicyclic}. The lower bound for $\reg(R/I_3(G))$ follows from Corollary \ref{cor:lower-bound}. So, here we only establish the upper bound. Consider the short exact sequence \eqref{eqn-1}.
 By Lemma \ref{lemm:tech-lemma}(1), $I_3(G):uv=\langle N_G(e)\rangle +I_3(G\setminus N_G[e])$, where $e=\{u,v\}$. Since $G\setminus N_G[e]$ is an induced subgraph of $G$, by Remark \ref{rmk:subgraph-nu}, $\nu_3(G\setminus N_G[e])\leq \nu_3(G)$. Note that $G\setminus N_G[e]$ is a tree. Thus, it follows from Theorem \ref{thm:trees} that $$\reg(R/(I_3(G):uv))=2\nu_3(G\setminus N_G[e])\leq 2\nu_3(G).$$
	Now set $J=\langle uv,I_3(G)\rangle$ and we consider the short exact sequence \eqref{eqn-2}, where $J:u=\langle v\rangle +I_2(H) +I_3(G\setminus \{N_G[u]\}),$ where $H$ is the union of $N_G^{\text edge}(u)$ and the complete graph on the vertex set $N_G(u)\setminus \{v\}$. Also, $\langle u,J\rangle =\langle u,I_3(G\setminus \{u\})\rangle $. Since $G\setminus \{u\}$ is a tree, by Theorem \ref{thm:trees}, we have
	$$\reg(R/\langle u,J\rangle)=2\nu_3(G\setminus \{u\})\leq 2\nu_3(G).$$
	% Note that
	% $J:u=\langle v\rangle +I_2(H)+\langle w_ix:x\in N_G(w_i)\setminus \{u_1\} \text{ for } 1\leq i\leq t\rangle +I_3(G\setminus N_G[u_1])$, where $H$ is a complete graph with the vertex set $V(H)=N_G(u_1)\setminus \{u_2\}$.
	Set $J_1=J:u$ and $J_i=J_{1}+\langle  u_2,\dots,u_{i}\rangle $, where $N_G(u)=\{v,u_2,\dots,u_t\}$ for $2\leq i\leq t$ and consider short exact sequences \eqref{eqn-3}.
	
	It can be observed that $J_{i-1}:u_i=\langle u_j:1\leq j\leq t, \ j\neq i\rangle +\langle w: w\in N_G(u_i)\setminus \{u\}\rangle +I_3(G\setminus \{N_G[u]\cup N_G[u_i]\})$ for $2\leq i\leq t$ and $J_{t}=\langle v,u_2,\dots,u_t\rangle +I_3(G\setminus \{N_G[u]\})$. Now it follows from \cite[Corollary 18.7]{Peeva} applying to the short exact sequence \eqref{eqn-3} that 
	\begin{align*}
	\reg(R/J_1)\leq \max \{\reg(R/(J_{i-1}:u_i))+1, \reg(R/J_{t}):2\leq i\leq t \}.
	\end{align*}
	Now, $\nu_3(G\setminus \{N_G[u]\cup N_G[w_i]\})\leq \nu_3(G)$ and $\nu_3(G\setminus \{N_G[u]\})\leq \nu_3(G)$ follow from Remark \ref{rmk:subgraph-nu}. Since $G\setminus \{N_G[u]\cup N_G[w_i]\}$ and $G\setminus \{N_G[u]\}$ are trees, by Theorem \ref{thm:trees}, $\reg(R/(J_i:w_i))=2\nu_3(G\setminus \{N_G[u]\cup N_G[w_i]\})\leq 2\nu_3(G)$ and 
	$\reg(R/J_{t})=2\nu_3(G\setminus \{N_G[u]\})\leq 2\nu_3(G)$. Therefore, it follows from applying \cite[Corollary 18.7]{Peeva} to short exact sequences \ref{eqn-1}, \ref{eqn-2} and \ref{eqn-3} that
	$\reg(R/I_3(G))\leq 2\nu_3(G)+2$.
	%\begin{align*}
	%\reg(R/I_3(G)) & \leq \max \{\reg(R/(I_3(G):u_1u_2))+2,\reg(R/J) \} \\
	%& \leq \max \{\reg(R/(I_3(G):u_1u_2))+2,\reg(R/(u_1,J)),\reg(R/J_1)+1 \} \\
	%& \leq \max \{\reg(R/(I_3(G):u_1u_2))+2,\reg(R/(u_1,J)),\reg(R/(J_1:w_1))+2,\reg(R/J_2)+1 \} \\
	%& \leq \max_{1\leq i\leq t} \{\reg(R/(I_3(G):u_1u_2))+2,\reg(R/(u_1,J)),\reg(R/(J_i:w_i))+2,\reg(R/J_{t+1})+1\}.
	%\end{align*}
%	Hence, the assertion follows.

\end{proof}

We now show by examples that all the three possibilities for the regularity of $R/I_3(G)$, namely, $2\nu_3(G)$, $2\nu_3(G)+1$, and $2\nu_3(G)+2$ indeed occur for unicyclic graphs.
	\begin{example}\label{example}
	    % It follows from Corollary \ref{cor:lower-bound} and Theorem \ref{thm:unicyclic} that if $G$ is a unicyclic graph, then $\reg(R/I_3(G))\in \{2\nu_3(G),\ 2\nu_3(G)+1,\ 2\nu_3(G)+2\}$. 
      Consider graphs $G_1$, $G_2$ and $G_3$ as in Figure \ref{fig:unicyclic}. Then using Macaulay 2 (\cite{M2}), it can be computed that $\reg(R/I_3(G_1))=2$, $\reg(R/I_3(G_2))=3$, and $\reg(R/I_3(G_3))=6$. Note that $\nu_3(G_1)=1=\nu_3(G_2)$ and $\nu_3(G_3)=2$.
	
	\noindent
	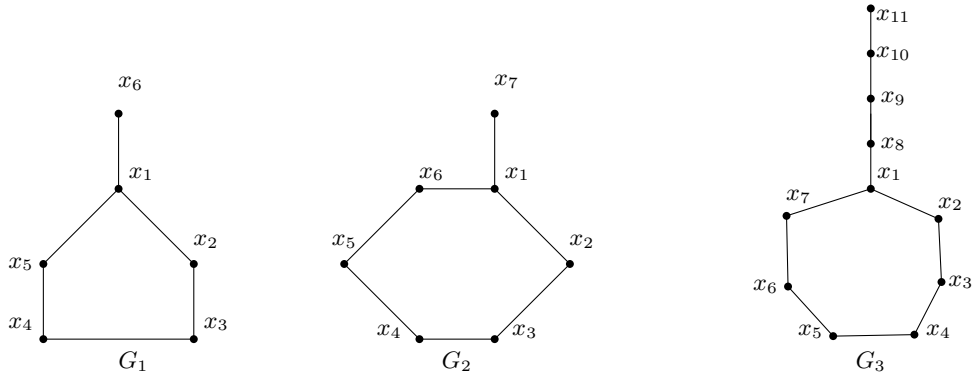
\begin{figure}[H]
	    \centering
     \begin{tikzpicture}[scale=1]
	    \draw  (-7,1)-- (-7,0);
\draw  (-7,0)-- (-8,-1);
\draw  (-8,-1)-- (-8,-2);
\draw  (-8,-2)-- (-6,-2);
\draw  (-6,-2)-- (-6,-1);
\draw  (-6,-1)-- (-7,0);
\draw  (-2,1)-- (-2,0);
\draw  (-3,0)-- (-2,0);
\draw  (-3,0)-- (-4,-1);
\draw  (-4,-1)-- (-3,-2);
\draw  (-3,-2)-- (-2,-2);
\draw  (-2,-2)-- (-1,-1);
\draw  (-1,-1)-- (-2,0);
\draw  (3,0)-- (1.88,-0.36);
\draw  (1.88,-0.36)-- (1.9,-1.3);
\draw  (1.9,-1.3)-- (2.5,-1.96);
\draw  (2.5,-1.96)-- (3.58,-1.94);
\draw  (3.58,-1.94)-- (3.94,-1.24);
\draw  (3.94,-1.24)-- (3.9,-0.4);
\draw  (3.9,-0.4)-- (3,0);
\draw  (3,0)-- (3,1);
\draw  (3,0.6)-- (3,1.2);
\draw  (3,1.2)-- (3,1.8);
\draw  (3,1.8)-- (3,2.4);
\begin{scriptsize}
\fill (-7,0) circle (1.5pt);
\draw (-6.7,0.2) node {$x_1$};
\fill (-8,-1) circle (1.5pt);
\draw (-8.3,-1) node {$x_5$};
\fill (-6,-1) circle (1.5pt);
\draw (-5.84,-0.7) node {$x_2$};
\fill (-8,-2) circle (1.5pt);
\draw (-8.3,-1.8) node {$x_4$};
\fill (-6,-2) circle (1.5pt);
\draw (-5.7,-1.8) node {$x_3$};
\fill (-3,0) circle (1.5pt);
\draw (-2.84,0.2) node {$x_6$};
\fill  (-2,0) circle (1.5pt);
\draw (-1.7,0.2) node {$x_1$};
\fill  (-4,-1) circle (1.5pt);
\draw (-4,-0.7) node {$x_5$};
\fill  (-1,-1) circle (1.5pt);
\draw (-0.84,-0.7) node {$x_2$};
\fill  (-3,-2) circle (1.5pt);
\draw (-3.4,-1.9) node {$x_4$};
\fill  (-2,-2) circle (1.5pt);
\draw (-1.6,-1.9) node {$x_3$};
\fill  (1.88,-0.36) circle (1.5pt);
\draw (2.04,-0.1) node {$x_7$};
\fill  (1.9,-1.3) circle (1.5pt);
\draw (1.6,-1.3) node {$x_6$};
\fill  (2.5,-1.96) circle (1.5pt);
\draw (2.2,-1.9) node {$x_5$};
\fill  (3.58,-1.94) circle (1.5pt);
\draw (3.9,-1.9) node {$x_4$};
\fill  (3.94,-1.24) circle (1.5pt);
\draw (4.2,-1.24) node {$x_3$};
\fill  (3.9,-0.4) circle (1.5pt);
\draw (4.06,-0.2) node {$x_2$};
\fill  (-7,1) circle (1.5pt);
\draw (-6.84,1.4) node {$x_6$};
\fill  (-2,1) circle (1.5pt);
\draw (-1.84,1.43) node {$x_7$};
\fill  (3,0) circle (1.5pt);
\draw (3.25,0.2) node {$x_1$};
\fill  (3,0.6) circle (1.5pt);
\draw (3.3,0.6) node {$x_8$};
\fill  (3,1.2) circle (1.5pt);
\draw (3.3,1.2) node {$x_9$};
\fill  (3,1.8) circle (1.5pt);
\draw (3.3,1.8) node {$x_{10}$};
\fill  (3,2.4) circle (1.5pt);
\draw (3.3,2.3) node {$x_{11}$};
\draw (-6.8,-2.3) node {$G_1$};
\draw (-2.5,-2.3) node {$G_2$};
\draw (3,-2.3) node {$G_3$};
\end{scriptsize}
\end{tikzpicture}
	    \caption{Unicyclic graphs}
	    \label{fig:unicyclic}
	\end{figure}
\end{example}
 
% 		\begin{minipage}{\linewidth}
% 			\begin{minipage}{.4\linewidth}
% 				The graph $G$, given on the right, is $G=C_7\cup P_5$. By Macaulay 2, it can be computed that $\reg(R/I_3(G))=6$, where $\nu_3(G)=2$.
% 			\end{minipage}
% 			\begin{minipage}{.6\linewidth}
% 		\begin{figure}[H]
% 		\begin{tikzpicture}[scale=1]
% 		\draw (-1.54,4.58)-- (-1.06,4.98);
% 		\draw (-1.06,4.98)-- (-0.16,4.98);
% 		\draw (-0.16,4.98)-- (0.46,4.6);
% 		\draw (0.46,4.6)-- (0.26,3.9);
% 		\draw (0.26,3.9)-- (-0.72,3.72);
% 		\draw (-0.72,3.72)-- (-1.54,4);
% 		\draw (-1.54,4)-- (-1.54,4.58);
% 		\draw (0.46,4.6)-- (1.3,4.6);
% 		\draw (1.3,4.6)-- (2.2,4.6);
% 		\draw (2.2,4.6)-- (3.18,4.6);
% 		\draw (3.18,4.6)-- (4.06,4.6);
% 		\begin{scriptsize}
% 		\fill  (-1.06,4.98) circle (1.5pt);
% 		\draw(-0.9,5.24) node {$f$};
% 		\fill  (-0.16,4.98) circle (1.5pt);
% 		\draw (0,5.22) node {$g$};
% 		\fill  (-1.54,4.58) circle (1.5pt);
% 		\draw (-1.5,4.8) node {$e$};
% 		\fill  (-1.54,4) circle (1.5pt);
% 		\draw (-1.4,4.26) node {$d$};
% 		\fill  (-0.72,3.72) circle (1.5pt);
% 		\draw (-0.56,3.98) node {$c$};
% 		\fill  (0.26,3.9) circle (1.5pt);
% 		\draw (0.45,4.0) node {$b$};
% 		\fill  (0.46,4.6) circle (1.5pt);
% 		\draw (0.62,4.86) node {$a$};
% 		\fill  (1.3,4.6) circle (1.5pt);
% 		\draw (1.46,4.86) node {$h$};
% 		\fill  (2.2,4.6) circle (1.5pt);
% 		\draw (2.3,4.82) node {$i$};
% 		\fill  (3.18,4.6) circle (1.5pt);
% 		\draw (3.28,4.82) node {$j$};
% 		\fill  (4.06,4.6) circle (1.5pt);
% 		\draw (4.2,4.78) node {$k$};
% 		\end{scriptsize}
% 		\end{tikzpicture}
% 	\end{figure}
% \end{minipage}
% \end{minipage}

\noindent
\textbf{Conflict of interest statement: } Not applicable.
\\ 
\textbf{Data availability statement: } Not applicable.
	\bibliographystyle{plain}
	\bibliography{reference}
\end{document}